\newtheorem{theorem}{Theorem}
\newtheorem{lemma}[theorem]{Lemma}
\theoremstyle{definition}
\newtheorem{definition}[theorem]{Definition}
\newtheorem{question}[theorem]{Question}
\DeclareMathOperator{\pr}{pr}
\DeclareMathOperator{\Cl}{Cl}
\title{On ordering of surjective cardinals}
\author{Guozhen Shen}
\address{Department of Philosophy (Zhuhai)\\
Sun Yat-sen University\\
No.~2 Daxue Road\\
Zhuhai 519082\\
Guangdong Province\\
People's Republic of China}
\email{shen\_guozhen@outlook.com}
\author{Wenjie Zhou}
\address{School of Philosophy\\
Wuhan University\\
No.~299 Bayi Road\\
Wuhan 430072\\
Hubei Province\\
People's Republic of China}
\email{3297835798@qq.com}
\subjclass[2020]{Primary 03E35; Secondary 03E10, 03E25}
\keywords{surjective cardinal, doubly ordered set, permutation model, axiom of choice}
\begin{document}

\begin{abstract}
Let $\mathrm{Card}$ denote the class of cardinals. For all cardinals $\mathfrak{a}$ and $\mathfrak{b}$, $\mathfrak{a}\leqslant\mathfrak{b}$ means that there is an injection from a set of cardinality $\mathfrak{a}$ into a set of cardinality $\mathfrak{b}$, and $\mathfrak{a}\leqslant^\ast\mathfrak{b}$ means that there is a partial surjection from a set of cardinality $\mathfrak{b}$ onto a set of cardinality $\mathfrak{a}$. A doubly ordered set is a triple $\langle P,\preccurlyeq,\preccurlyeq^\ast\rangle$ such that $\preccurlyeq$ is a partial ordering on $P$, $\preccurlyeq^\ast$ is a preordering on $P$, and ${\preccurlyeq}\subseteq{\preccurlyeq^\ast}$. In 1966, Jech proved that for every partially ordered set $\langle P,\preccurlyeq\rangle$, there exists a model of $\mathsf{ZF}$ in which $\langle P,\preccurlyeq\rangle$ can be embedded into $\langle\mathrm{Card},\leqslant\rangle$. We generalize this result by showing that for every doubly ordered set $\langle P,\preccurlyeq,\preccurlyeq^\ast\rangle$, there exists a model of $\mathsf{ZF}$ in which $\langle P,\preccurlyeq,\preccurlyeq^\ast\rangle$ can be embedded into $\langle\mathrm{Card},\leqslant,\leqslant^\ast\rangle$.
\end{abstract}

\maketitle

\section{Introduction}

\begin{definition}
Let $\mathrm{Card}$ denote the class of cardinals. For all $\mathfrak{a},\mathfrak{b}\in\mathrm{Card}$,
\begin{enumerate}
  \item $\mathfrak{a}\leqslant\mathfrak{b}$ means that there is an injection
        from a set of cardinality $\mathfrak{a}$ into a set of cardinality $\mathfrak{b}$;
  \item $\mathfrak{a}\leqslant^\ast\mathfrak{b}$ means that there is a partial surjection
        from a set of cardinality $\mathfrak{b}$ onto a set of cardinality $\mathfrak{a}$.
\end{enumerate}
\end{definition}

Assuming the axiom of choice, it is well known that $\langle\mathrm{Card},\leqslant\rangle$ is a well-ordered class;
however, in its absence, we know only that it is a partially ordered class. In fact, a well-known theorem of Hartogs states
that the axiom of choice is equivalent to the assertion that $\langle\mathrm{Card},\leqslant\rangle$ is a linearly ordered class.
In 1966, Jech~\cite{Jech1966} proved that the structure $\langle\mathrm{Card},\leqslant\rangle$ can exhibit arbitrarily high complexity;
more precisely,

\begin{theorem}\label{SZ01}
Let $\mathfrak{M}$ be a model of $\mathsf{ZFC}$ and $\langle P,\preccurlyeq\rangle$ a partially ordered set in $\mathfrak{M}$.
Then there exists a model $\mathfrak{N}\supseteq\mathfrak{M}$ of $\mathsf{ZF}$ in which
$\langle P,\preccurlyeq\rangle$ can be embedded into $\langle\mathrm{Card},\leqslant\rangle$.
\end{theorem}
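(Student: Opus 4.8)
The plan is to build the model in two stages: first produce a permutation (Fraenkel--Mostowski) model of $\mathsf{ZFA}$ in which $\langle P,\preccurlyeq\rangle$ embeds into the cardinals, and then transfer this to a genuine model of $\mathsf{ZF}$ extending $\mathfrak M$ via the Jech--Sochor embedding theorem. For the permutation model, I would work inside $\mathfrak M$ with a family $\langle A_p\rangle_{p\in P}$ of pairwise disjoint countably infinite sets of atoms, put $A=\bigcup_{p\in P}A_p$, let $G$ be the group of all permutations of $A$ fixing each $A_p$ setwise, and let $\mathcal F$ be the normal filter on $G$ generated by the pointwise stabilizers $\mathrm{fix}(E)$ of finite sets $E\subseteq A$; let $\mathcal N$ be the associated permutation model. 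The witnessing objects are the ``downward closures'' $\hat A_p=\bigcup\{A_q:q\preccurlyeq p\}$. Each $\hat A_p$ is fixed setwise by every element of $G$, hence lies in $\mathcal N$ with empty support, and since $P$ is a pure set the function $p\mapsto\hat A_p$ also lies in $\mathcal N$. I claim $p\mapsto\lvert\hat A_p\rvert^{\mathcal N}$ is the desired embedding of $\langle P,\preccurlyeq\rangle$ into $\langle\mathrm{Card},\leqslant\rangle$ in $\mathcal N$.

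The easy direction: if $p\preccurlyeq q$ then, by transitivity of $\preccurlyeq$, $\hat A_p\subseteq\hat A_q$, and the inclusion map has empty support, so $\lvert\hat A_p\rvert\leqslant\lvert\hat A_q\rvert$ in $\mathcal N$. The substantive direction is the converse, proved by a support argument. Suppose $p\not\preccurlyeq q$; choose $r\preccurlyeq p$ with $r\not\preccurlyeq q$, so that $A_r\subseteq\hat A_p$ while $A_r\cap\hat A_q=\varnothing$ by disjointness. If $f\in\mathcal N$ were an injection $\hat A_p\to\hat A_q$ with support $E$, pick $a\in A_r\setminus E$ and then $a'\in A_r\setminus(E\cup\{a\})$; since $f(a)\in\hat A_q$ lies outside $A_r$, the transposition $(a\ a')$ fixes $E$ pointwise and fixes $f(a)$, hence fixes $f$, which forces $f(a')=f(a)$ and contradicts injectivity. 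Therefore $\lvert\hat A_p\rvert\leqslant\lvert\hat A_q\rvert$ in $\mathcal N$ if and only if $p\preccurlyeq q$; in particular $p\neq q$ yields $p\not\preccurlyeq q$ or $q\not\preccurlyeq p$, so $p\mapsto\lvert\hat A_p\rvert$ is injective, and we have a genuine order-embedding into $\langle\mathrm{Card},\leqslant\rangle$ inside $\mathcal N$.

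Finally I would apply the Jech--Sochor theorem with a transfer ordinal $\alpha$ chosen large enough that $P$, the sets $\hat A_p$, the function $p\mapsto\hat A_p$, and every function between two of the $\hat A_p$'s have rank below $\alpha$. The resulting $\alpha$-isomorphism embeds $\mathcal N$ into a symmetric extension $\mathfrak N\supseteq\mathfrak M$ of $\mathsf{ZF}$ and sends each $\hat A_p$ to a set $X_p\in\mathfrak N$ such that an injection $X_p\to X_q$ exists in $\mathfrak N$ exactly when one exists in $\mathcal N$ (such an injection has rank below $\alpha$, hence is already accounted for under the isomorphism), and the assignment $p\mapsto X_p$ is again a set of $\mathfrak N$. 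Thus $p\mapsto\lvert X_p\rvert$ embeds $\langle P,\preccurlyeq\rangle$ into $\langle\mathrm{Card},\leqslant\rangle$ in $\mathfrak N$, as required. I expect the main obstacle to be the symmetry argument of the second paragraph --- choosing the witnessing sets so that no finite support can encode an injection $\hat A_p\to\hat A_q$ when $p\not\preccurlyeq q$ --- together with the careful bookkeeping needed to ensure the Jech--Sochor transfer preserves all the relevant positive \emph{and} negative statements ``$\lvert\hat A_p\rvert\leqslant\lvert\hat A_q\rvert$''.
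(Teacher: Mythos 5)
Your argument is correct, but note that the paper never proves Theorem~\ref{SZ01} itself: it is quoted as Jech's 1966 theorem, and what you have written is essentially Jech's classical proof --- pairwise disjoint countably infinite blocks $A_p$ of atoms, the group of permutations preserving each block setwise, finite supports, the downward unions $\hat A_p=\bigcup_{q\preccurlyeq p}A_q$ as witnesses, a transposition-versus-support argument for the negative direction, and Jech--Sochor to pass to $\mathsf{ZF}$. The support argument goes through as you state it (one may simply take $r=p$; two fresh atoms $a,a'\in A_p\setminus E$ give a transposition fixing the support and fixing $f(a)\notin A_p$, forcing $f(a')=f(a)$), and your treatment of the transfer step is at the same level of detail as the paper's own appeal to Jech--Sochor for Theorem~\ref{SZ03}. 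The instructive comparison is with the construction the paper actually carries out for the generalization: there the atoms form a recursively built hierarchy of quadruples because the relations $\leqslant$ and $\leqslant^\ast$ must be realized \emph{independently}. In your flat model the same transposition argument shows that for $p\not\preccurlyeq q$ there is not even a partial surjection from $\hat A_q$ onto $\hat A_p$, so $|\hat A_p|\leqslant^\ast|\hat A_q|$ holds exactly when $p\preccurlyeq q$; hence your construction can only realize doubly ordered sets with $\preccurlyeq^\ast={\preccurlyeq}$ and cannot produce the configuration $p\not\preccurlyeq q$ together with $p\preccurlyeq^\ast q$. The paper's extra machinery --- the $\omega$-indexed copies at each level and the projection $\pr_2$, which yields surjections that cannot be inverted to injections --- is precisely what buys that additional control. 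For Theorem~\ref{SZ01} alone, your simpler construction suffices and is the standard one.
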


A natural question is whether the structure $\langle\mathrm{Card},\leqslant^\ast\rangle$ exhibits the same property.
In~\cite{Karagila2014}, Karagila proved the following generalization of Theorem~\ref{SZ01}.

\begin{theorem}\label{SZ02}
Let $\mathfrak{M}$ be a model of $\mathsf{ZFC}$, $\kappa$ an infinite cardinal in $\mathfrak{M}$,
and $\langle P,\preccurlyeq\rangle$ a partially ordered set in $\mathfrak{M}$.
Then there exists a model $\mathfrak{N}\supseteq\mathfrak{M}$ of $\mathsf{ZF}+\mathsf{DC}_\kappa$
in which $\langle P,\preccurlyeq\rangle$ can be embedded into both
$\langle\mathrm{Card},\leqslant\rangle$ and $\langle\mathrm{Card},\leqslant^\ast\rangle$.
\end{theorem}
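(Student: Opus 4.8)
The plan is to realize the desired model first as a Fraenkel--Mostowski permutation model over $\mathfrak{M}$, and then to transfer it to a genuine model of $\mathsf{ZF}$ by the Jech--Sochor embedding theorem in the form that preserves $\mathsf{DC}_\kappa$. Concretely, I would pass to the kernel obtained from $\mathfrak{M}$ by adjoining, for each $p\in P$, a set $A_p$ of $\kappa^{+}$ atoms, the $A_p$ pairwise disjoint, and set $A=\bigcup_{p\in P}A_p$. Let $G=\prod_{p\in P}\mathrm{Sym}(A_p)$ be the group of all permutations of $A$ fixing each block $A_p$ setwise, acting on the kernel in the usual way, and let $\mathcal{F}$ be the normal filter on $G$ generated by the pointwise stabilizers $\mathrm{fix}(E)$ of the sets $E\subseteq A$ with $|E|\leqslant\kappa$. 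Since the $\leqslant\kappa$-sized subsets of $A$ form an ideal closed under unions of at most $\kappa$ of its members, $\mathcal{F}$ is $\kappa^{+}$-complete, and the standard argument then shows that the resulting permutation model $\mathcal{V}$ satisfies $\mathsf{ZFA}+\mathsf{DC}_\kappa$. Note that $P\in\mathcal{V}$ and that $G$ fixes the function $p\mapsto A_p$.

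For $p\in P$ put $B_p=\bigcup_{q\preccurlyeq p}A_q$. Each $B_p$ is a $G$-fixed subset of $A$, hence $B_p\in\mathcal{V}$; I would set $\mathfrak{a}_p=|B_p|^{\mathcal{V}}$ and $\phi(p)=\mathfrak{a}_p$, noting that $\phi$ too is $G$-fixed (so $\phi\in\mathcal{V}$) and that each $\mathfrak{a}_p$ is an infinite cardinal. The goal is to show that, in $\mathcal{V}$, the single map $\phi$ is an embedding of $\langle P,\preccurlyeq\rangle$ into $\langle\mathrm{Card},\leqslant\rangle$ and simultaneously into $\langle\mathrm{Card},\leqslant^{\ast}\rangle$; this yields the theorem after transfer. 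The easy half is common to both relations: if $p\preccurlyeq q$ then $B_p\subseteq B_q$, the inclusion $B_p\hookrightarrow B_q$ is $G$-fixed and hence lies in $\mathcal{V}$, so $\mathfrak{a}_p\leqslant\mathfrak{a}_q$ and therefore also $\mathfrak{a}_p\leqslant^{\ast}\mathfrak{a}_q$.

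The core of the proof is the converse, which I would establish for $\leqslant$ and $\leqslant^{\ast}$ by one and the same support argument. Assume $p\not\preccurlyeq q$ and fix $r\preccurlyeq p$ with $r\not\preccurlyeq q$; then $A_r\subseteq B_p$, while $A_r\cap B_q=\emptyset$ because the blocks are pairwise disjoint and $A_r$ is not among those comprising $B_q$. Suppose $g\in\mathcal{V}$ is a function witnessing either $\mathfrak{a}_p\leqslant\mathfrak{a}_q$ (an injection $B_p\to B_q$) or $\mathfrak{a}_p\leqslant^{\ast}\mathfrak{a}_q$ (a partial surjection of $B_q$ onto $B_p$), and let $E\subseteq A$ be a support of $g$ with $|E|\leqslant\kappa$. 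Since $|A_r|=\kappa^{+}>\kappa$, at most $\kappa$ members of $A_r$ are excluded, so I may pick $a\in A_r\setminus E$ together with a point $b\in B_q\setminus E$ that is $g$-linked to $a$ --- namely $b=g(a)$ in the injection case, or $b$ some $g$-preimage of $a$ in the surjection case. Then $b$ lies in a block $A_s$ with $s\preccurlyeq q$, so $s\neq r$ and $b\notin A_r$. Choosing $a'\in A_r\setminus(E\cup\{a\})$ and letting $\pi\in G$ be the transposition of $a$ and $a'$, we have $\pi\in\mathrm{fix}(E)$ and $\pi$ fixes $b$, hence $\pi g=g$; applying $\pi$ to $(a,b)\in g$ (resp.\ to $(b,a)\in g$) gives $(a',b)\in g$ (resp.\ $(b,a')\in g$), contradicting the injectivity of $g$ (resp.\ the fact that $g$ is a function). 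Thus $\mathfrak{a}_p\not\leqslant\mathfrak{a}_q$ and $\mathfrak{a}_p\not\leqslant^{\ast}\mathfrak{a}_q$. Together with the easy half this makes $\phi$ an embedding into both structures, and $\phi$ is injective since for $p\neq q$ at least one of $p\not\preccurlyeq q$, $q\not\preccurlyeq p$ holds.

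It remains to transfer from $\mathcal{V}$ to a model of $\mathsf{ZF}$. The assertion that $\phi$ is such an embedding is decided entirely by $\langle P,\preccurlyeq\rangle$ together with the (non)existence of certain functions among the fixed, bounded-rank sets $B_p$, so it is absolute between $\mathcal{V}$ and any transitive model containing a large enough rank-initial segment of $\mathcal{V}$. Hence the Jech--Sochor embedding theorem, in the refinement that preserves $\mathsf{DC}_\kappa$ --- equivalently, running the construction above directly as a symmetric extension of $\mathfrak{M}$, obtained by adjoining the blocks $A_p$ as mutually generic sets via a forcing closed under descending sequences of length $\leqslant\kappa$ and with the analogous $\kappa^{+}$-complete symmetry filter --- produces a model $\mathfrak{N}\supseteq\mathfrak{M}$ of $\mathsf{ZF}+\mathsf{DC}_\kappa$ in which $\langle P,\preccurlyeq\rangle$ embeds into $\langle\mathrm{Card},\leqslant\rangle$ and into $\langle\mathrm{Card},\leqslant^{\ast}\rangle$. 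I expect the only real work to be bookkeeping: choosing the blocks $A_p$ large enough (size $\kappa^{+}$, rather than merely countable as in Theorem~\ref{SZ01}) so that the transposition argument survives supports of size up to $\kappa$, and arranging the transfer so that $\mathsf{DC}_\kappa$ is not destroyed. The feature genuinely new relative to Theorem~\ref{SZ01} --- simultaneous control of $\leqslant^{\ast}$ --- costs essentially nothing, because a support of a function already caps how much choice it can encode, and that cap is exactly what the transposition exploits.
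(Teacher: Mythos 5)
Your argument is essentially correct, but note that the paper does not actually prove Theorem~\ref{SZ02}: it quotes it from Karagila's paper, and its own machinery in Section~2 is built for the strictly harder Theorem~\ref{SZ03}, where $\preccurlyeq$ and $\preccurlyeq^\ast$ must be controlled independently; the concluding remarks indicate that the $\mathsf{DC}_\kappa$ version of that construction is obtained by replacing $\omega$ with $\kappa^+$, using supports of size at most $\kappa$, and transferring via Pincus's theorem rather than plain Jech--Sochor. Your route is the simpler, one-level block construction (essentially Jech's original construction scaled to blocks of size $\kappa^+$, in the spirit of Karagila's own proof): atoms $A_p$ of size $\kappa^+$, $B_p=\bigcup_{q\preccurlyeq p}A_q$, the full symmetric group on each block, and supports of size at most $\kappa$. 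This suffices precisely because in Theorem~\ref{SZ02} the same order is embedded into both relations, so on the image of your map $\leqslant$ and $\leqslant^\ast$ coincide, and no analogue of the paper's recursive levels of atoms (which exist to make $|S_p|\leqslant^\ast|S_q|$ hold without $|S_p|\leqslant|S_q|$) is needed. Your transposition argument is sound: when $p\not\preccurlyeq q$ one has $A_p\cap B_q=\varnothing$, so the $g$-linked point $b$ lies outside $A_p$ and is automatically fixed by the swap; the condition $b\notin E$ you state is neither needed nor guaranteed, and you may simply take $r=p$. The only thin spot is the final step: Jech--Sochor by itself does not preserve $\mathsf{DC}_\kappa$, so one must either invoke Pincus's transfer theorem for suitably boundable statements, as the paper does for its own $\mathsf{DC}_\kappa$ generalization, or run the construction directly as a symmetric extension by a forcing closed under sequences of length $\kappa$ with a $\kappa^+$-complete normal filter and prove the $\mathsf{DC}_\kappa$-preservation lemma there, as Karagila does; you correctly name both options, but this is where most of the genuine technical work of the theorem lives, and your sketch leaves it at the level of a citation.
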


Theorem~\ref{SZ02} falls short of optimality in the following sense.
Without assuming the axiom of choice, the best we can conclude is that
$\langle\mathrm{Card},\leqslant^\ast\rangle$ is a preordered class.
In fact, the assertion that $\langle\mathrm{Card},\leqslant^\ast\rangle$ is a partially ordered class
is the so called dual Cantor--Bernstein theorem (see \cite{BM1990}). This statement is not provable in $\mathsf{ZF}$,
and whether it implies the axiom of choice remains a long-standing open question.

\begin{definition}
A \emph{doubly ordered set} is a triple $\langle P,\preccurlyeq,\preccurlyeq^\ast\rangle$ such that
$\preccurlyeq$ is a partial ordering on $P$, $\preccurlyeq^\ast$ is a preordering on $P$, and
${\preccurlyeq}\subseteq{\preccurlyeq^\ast}$ (i.e., $p\preccurlyeq q$ implies $p\preccurlyeq^\ast q$ for all $p,q\in P$).
\end{definition}

In this article, we prove the following generalization of Theorem~\ref{SZ01},
and also hint at a proof of a generalization of Theorem~\ref{SZ02}.

\begin{theorem}\label{SZ03}
Let $\mathfrak{M}$ be a model of $\mathsf{ZFC}$ and $\langle P,\preccurlyeq,\preccurlyeq^\ast\rangle$ a doubly ordered set in $\mathfrak{M}$.
Then there exists a model $\mathfrak{N}\supseteq\mathfrak{M}$ of $\mathsf{ZF}$ in which $\langle P,\preccurlyeq,\preccurlyeq^\ast\rangle$
can be embedded into $\langle\mathrm{Card},\leqslant,\leqslant^\ast\rangle$.
\end{theorem}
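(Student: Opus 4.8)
The plan is to construct a suitable permutation (Fraenkel--Mostowski) model and then transport it to a model of $\mathsf{ZF}$ extending $\mathfrak M$ via the Jech--Sochor embedding theorem. The transport step is routine: once witnessing sets $A_p$ ($p\in P$) have been fixed in the permutation model, the assertion that $p\mapsto|A_p|$ embeds $\langle P,\preccurlyeq,\preccurlyeq^\ast\rangle$ into $\langle\mathrm{Card},\leqslant,\leqslant^\ast\rangle$ amounts to a bounded statement about the existence and non-existence of injections and partial surjections between sets of low rank, which Jech--Sochor preserves; moreover, since $\preccurlyeq$ is antisymmetric, this map is automatically injective. So the real work lies in the permutation model, which I describe next.

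Work in $\mathfrak M$; fix a well-ordering of $P$ and a large enough ordinal $L$. The atoms are organized into pairwise disjoint blocks $B_{s,\ell}=\{z^{s,\ell}_{\xi,\delta}:\xi\in\omega,\ \delta\in{}^{\ell}2\}$ indexed by $(s,\ell)\in P\times L$, and the set $\mathbb A$ of all atoms is their union. Let $G$ be the product over $s\in P$ of copies of $\mathrm{Sym}(\omega)\ltimes({}^{L}2)^{\omega}$, the $s$-th factor acting on $\bigcup_{\ell}B_{s,\ell}$ by permuting the coordinate $\xi$ uniformly across its blocks and, for each value of $\xi$, translating $\delta$ by a (separately chosen) element of ${}^{L}2$ restricted to $\ell$; take supports to be finite subsets of $\mathbb A$, and let $\mathcal V$ be the resulting permutation model. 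For $p\in P$ put $A_p:=\bigcup\{B_{s,\ell}:(s,\ell)\in\Gamma_p\}$, for a set $\Gamma_p\subseteq P\times L$ to be chosen; as a union of whole blocks, $A_p$ is fixed by all of $G$ and so lies in $\mathcal V$. The key structural facts to verify in $\mathcal V$ are: $B_{s,\ell}$ admits a partial surjection onto $B_{s',\ell'}$ exactly when $s=s'$ and $\ell\geqslant\ell'$ (truncate $\delta$), and an injection into it exactly when $s=s'$ and $\ell=\ell'$. From these, via the usual support arguments --- which show that any symmetric injection or partial surjection between two of the $A_p$ must act block by block --- one obtains that $|A_p|\leqslant|A_q|$ holds iff $\Gamma_p\subseteq\Gamma_q$, while $|A_p|\leqslant^\ast|A_q|$ holds iff, for every $s$, there is an injection $\iota$ from $\Gamma_p^s:=\{\ell:(s,\ell)\in\Gamma_p\}$ into $\Gamma_q^s$ with $\iota(\ell)\geqslant\ell$ for all $\ell$.

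It remains to choose the $\Gamma_p$ so that these two conditions reduce to $p\preccurlyeq q$ and to $p\preccurlyeq^\ast q$ respectively; this combinatorial design is the crux. Split $L$ into pairwise disjoint intervals $J_r$ ($r\in P$), each of order type $\omega$. For $s\preccurlyeq^\ast p$, let $\Gamma_p^s$ be the union, taken over all $r$ with $s\preccurlyeq^\ast r\preccurlyeq^\ast p$, of $J_r$ when $r\preccurlyeq p$ and of $J_r$ with its least point removed when $r\not\preccurlyeq p$; and let $\Gamma_p^s:=\emptyset$ when $s\not\preccurlyeq^\ast p$. Using $\preccurlyeq\subseteq\preccurlyeq^\ast$ one checks directly that $\Gamma_p\subseteq\Gamma_q$ iff $p\preccurlyeq q$: the nontrivial direction is witnessed at base $s=p$ on the interval $J_p$, whose least point belongs to $\Gamma_p^p$ but belongs to $\Gamma_q^p$ only when $p\preccurlyeq q$. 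Likewise the domination condition holds iff $p\preccurlyeq^\ast q$: within any single interval a full copy of $\omega$ and a cofinal copy of $\omega$ dominate one another, so the sole obstruction to domination is that some interval $J_r$ meets $\Gamma_p^s$ while $\Gamma_q^s$ misses it entirely, and (again testing $s=p$ on $J_p$) this occurs precisely when $p\not\preccurlyeq^\ast q$. Transporting $\mathcal V$ through Jech--Sochor then produces the desired model $\mathfrak N$.

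I expect the two points just indicated to be the real difficulties: first, pinning down the definition of the $\Gamma_p$, where the familiar ``embed a poset into a power set via its down-sets'' idea has to be refined so as to reflect $\preccurlyeq$ and $\preccurlyeq^\ast$ simultaneously even when they differ and $\preccurlyeq^\ast$ is not antisymmetric; and second, executing carefully the support computations that force every symmetric injection and partial surjection between unions of blocks to come from a block-by-block assignment, so that the cardinal comparisons among the $A_p$ are controlled exactly by the inclusions and dominations among the $\Gamma_p$.
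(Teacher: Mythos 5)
Your proposal takes a genuinely different route from the paper's, and as far as I can check it is viable: the combinatorial design does what you claim. In particular, $\Gamma_p\subseteq\Gamma_q$ iff $p\preccurlyeq q$ (the test at $s=p$ on $\min J_p$ is exactly right), and the level-wise domination criterion holds iff $p\preccurlyeq^\ast q$; here the $P$-coordinate $s$ is doing real work, since testing $s=p$ makes $\Gamma_q^p=\varnothing$ whenever $p\not\preccurlyeq^\ast q$, which is what rules out the ``escape into higher intervals'' that would defeat a one-dimensional version of the design. Note also that your requirement that the translation be a single element of ${}^{L}2$ per index $\xi$, restricted to each level, is essential: it is precisely what makes the truncation maps equivariant with empty support. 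The paper proceeds quite differently: it builds the atoms recursively, so that each atom of $S_p$ acquires one new atom in $S_q$ when $p\prec q$ (giving the injection) and $\omega$ many new atoms projecting back onto it when $p\preccurlyeq^\ast q$ but $p\not\preccurlyeq q$ (giving the partial surjection without an injection), and the negative directions follow from three short closure/support lemmas (Lemmas~\ref{sh01}--\ref{sh03}) applied in Lemmas~\ref{sh05} and~\ref{sh06}. What your approach buys is a clean separation of concerns: a flat, homogeneous block structure on the atoms plus a purely order-theoretic encoding via the $\Gamma_p$; what it costs is that essentially the whole burden falls on the deferred ``usual support arguments'', which for your wreath-type group $\mathrm{Sym}(\omega)\ltimes({}^{L}2)^{\omega}$ are heavier than usual and are only asserted in your write-up. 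Concretely, you still must prove: (i) for $H$ the stabilizer of a finite support, the image of an $H$-orbit under an $H$-equivariant injection or partial surjection is a single $H$-orbit, so the ``big'' orbit of each block maps to (or is covered by) the big orbit of a single block, while the exceptional orbits sitting over supported indices can neither cover nor injectively absorb a big orbit, and distinct big orbits need distinct covering orbits (this yields the injectivity of your assignment $\iota$); and (ii) the torsor computation that an equivariant map from a level-$\ell'$ fiber to a level-$\ell$ fiber exists only when $\ell\leqslant\ell'$, is then automatically surjective, and is injective only when $\ell=\ell'$. Your phrase ``acts block by block'' also needs the standard qualification ``up to finitely many fibers at supported indices''. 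I believe all of this goes through, so I would count your proposal as a correct alternative proof in outline; but the paper's recursive construction makes the corresponding verifications much shorter, whereas in your setting they constitute the bulk of the proof and remain to be written out.
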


\section{The main construction}
We shall employ the method of permutation models.
We refer the reader to~\cite[Chap.~8]{Halbeisen2025} or~\cite[Chap.~4]{Jech1973}
for an introduction to the theory of permutation models.
Permutation models are not models of $\mathsf{ZF}$;
they are models of $\mathsf{ZFA}$ (the Zermelo--Fraenkel set theory with atoms).
Given a doubly ordered set $\langle P,\preccurlyeq,\preccurlyeq^\ast\rangle$ in the ground model,
we shall construct a permutation model in which there exists a family of sets $\langle S_p\rangle_{p\in P}$
such that, for all $p,q\in P$, $p\preccurlyeq q$ if and only if $|S_p|\leqslant|S_q|$,
and $p\preccurlyeq^\ast q$ if and only if $|S_p|\leqslant^\ast|S_q|$.
Then, by the Jech--Sochor transfer theorem (see~\cite[Theorem~17.1]{Halbeisen2025} or~\cite[Theorem~6.1]{Jech1973}),
we arrive at Theorem~\ref{SZ03}.

For any quadruple $\langle x_0,x_1,x_2,x_3\rangle$ and any $i<4$, define $\pr_i(\langle x_0,x_1,x_2,x_3\rangle)=x_i$.
For any set $S$, let $\mathscr{S}(S)$ denote the set of all permutations of $S$.

Let $\langle P,\preccurlyeq,\preccurlyeq^\ast\rangle$ be a doubly ordered set in the ground model.
We construct the set $A$ of atoms and a group $\mathcal{G}$ of permutations of $A$ as follows. Define by recursion
\begin{align*}
A_0     & =\{\langle0,p,\varnothing,k\rangle\mid p\in P\text{ and }k\in\omega\};\\
A_{n+1} & =A_n\cup\{\langle n+1,q,a,0\rangle\mid q\in P,a\in A_n\text{ and }\pr_1(a)\prec q\}\\
        & \phantom{{}=A_n}\cup
          \{\langle n+1,q,a,k\rangle\mid q\in P,a\in A_n,\pr_1(a)\not\preccurlyeq q,\pr_1(a)\preccurlyeq^\ast q\text{ and }k\in\omega\}.
\end{align*}
Define by recursion a group $\mathcal{G}_n$ of permutations of $A_n$ as follows:
\begin{itemize}
  \item $\mathcal{G}_0=\{f\in\mathscr{S}(A_0)\mid\pr_1(f(a))=\pr_1(a)\text{ for all }a\in A_0\}$;
  \item For all $f\in\mathscr{S}(A_{n+1})$, $f\in\mathcal{G}_{n+1}$ if and only if $f{\upharpoonright}A_n\in\mathcal{G}_n$
        and, for all $b\in A_{n+1}\setminus A_n$, $\pr_1(f(b))=\pr_1(b)$ and $\pr_2(f(b))=f(\pr_2(b))$.
\end{itemize}
Let $A=\bigcup_{n\in\omega}A_n$ and let
$\mathcal{G}=\{\pi\in\mathscr{S}(A)\mid\pi{\upharpoonright}A_n\in\mathcal{G}_n\text{ for all }n\in\omega\}$.
Let $\mathcal{V}$ be the permutation model determined by $\mathcal{G}$ and finite supports;
that is, $x\in\mathcal{V}$ if and only if $x\subseteq\mathcal{V}$ and $x$ has a \emph{finite support}---a finite
subset $B\subseteq A$ such that every permutation $\pi\in\mathcal{G}$ fixing $B$ pointwise also fixes $x$.
Note that, for every $n\in\omega$, $A_n$ is fixed by every permutation in $\mathcal{G}$, so $A_n\in\mathcal{V}$.
For each $p\in P$, let $S_p=\{a\in A\mid\pr_1(a)=p\}$.

\begin{lemma}\label{sh04}
In $\mathcal{V}$, for all $p,q\in P$, if $p\preccurlyeq q$, then $|S_p|\leqslant|S_q|$,
and if $p\preccurlyeq^\ast q$, then $|S_p|\leqslant^\ast|S_q|$.
\end{lemma}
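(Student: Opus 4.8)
The plan is to exhibit, for each of the two implications, an explicit map relating $S_p$ and $S_q$ that is read off directly from the recursion defining $A$ and is invariant under all of $\mathcal{G}$; any such map is a subset of $\mathcal{V}$ with empty support, and hence belongs to $\mathcal{V}$. The only substantive work is to check that the defining clauses of $\mathcal{G}_{n+1}$---preservation of the label coordinate $\pr_1$, together with the identity $\pr_2(f(b))=f(\pr_2(b))$ on atoms of positive level---force this invariance, plus the routine verifications of injectivity and surjectivity. Throughout I will use that every $\pi\in\mathcal{G}$ restricts to a permutation of each $A_n$, and therefore preserves both the level $\pr_0$ and the label $\pr_1$ on $A$, and satisfies $\pr_2(\pi(b))=\pi(\pr_2(b))$ whenever $\pr_0(b)\geqslant 1$.

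For the clause on $\preccurlyeq$: if $p=q$ there is nothing to prove, so assume $p\prec q$. Given $a\in S_p$, put $n=\pr_0(a)$; since $\pr_1(a)=p\prec q$, the atom $\langle n+1,q,a,0\rangle$ lies in $A_{n+1}\setminus A_n$, so $g\colon S_p\to S_q$ defined by $g(a)=\langle n+1,q,a,0\rangle$ is well defined, and it is injective because $\pr_2(g(a))=a$. For invariance, fix $\pi\in\mathcal{G}$; it permutes $S_p$ and $S_q$, and applying the facts above to $g(a)\in A_{n+1}\setminus A_n$ shows that $\pi(g(a))$ is an atom of $A_{n+1}\setminus A_n$ of level $n+1$ with label $q$ and third coordinate $\pi(a)$. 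Since $\pr_1(\pi(a))=p\prec q$, no atom coming from the ``$\not\preccurlyeq$, $\preccurlyeq^\ast$'' clause can have this form, so necessarily $\pi(g(a))=\langle n+1,q,\pi(a),0\rangle=g(\pi(a))$. Thus $\pi$ fixes $g$, so $g\in\mathcal{V}$ and $|S_p|\leqslant|S_q|$ in $\mathcal{V}$.

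For the clause on $\preccurlyeq^\ast$: if $p\preccurlyeq q$, the previous paragraph and the $\mathsf{ZF}$-provable implication ${\leqslant}\subseteq{\leqslant^\ast}$ already give $|S_p|\leqslant^\ast|S_q|$, so assume $p\preccurlyeq^\ast q$ but $p\not\preccurlyeq q$. Set $D=\{b\in A\mid\pr_1(b)=q,\ \pr_0(b)\geqslant 1,\ \pr_1(\pr_2(b))=p\}$; then $D\subseteq S_q$ and $\pr_2(b)\in S_p$ for every $b\in D$, so $h\colon D\to S_p$, $h(b)=\pr_2(b)$, is well defined. It is surjective: given $a\in S_p$ with $n=\pr_0(a)$, the hypotheses $p\not\preccurlyeq q$ and $p\preccurlyeq^\ast q$ put $\langle n+1,q,a,0\rangle$ into $A_{n+1}$, hence into $D$, and $h$ sends it to $a$. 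For invariance, each $\pi\in\mathcal{G}$ maps $D$ onto $D$ (it preserves $\pr_0$, $\pr_1$, and $b\mapsto\pr_1(\pr_2(b))$), and $h(\pi(b))=\pr_2(\pi(b))=\pi(\pr_2(b))=\pi(h(b))$; so $\pi$ fixes $h$, whence $h\in\mathcal{V}$, and $h$ is a partial surjection from $S_q$ onto $S_p$, yielding $|S_p|\leqslant^\ast|S_q|$ in $\mathcal{V}$.

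The one point that I expect to need genuine care is the invariance of $g$: one must observe that, for a fixed triple of first three coordinates, at most one of the two clauses defining $A_{n+1}$ can contribute an atom, which is exactly what pins $\pi(g(a))$ down to $g(\pi(a))$. Everything else is bookkeeping---that $g$ and $h$, being sets of ordered pairs of atoms with empty support, genuinely lie in $\mathcal{V}$, and that ${\leqslant}\subseteq{\leqslant^\ast}$ relativizes to $\mathcal{V}$. No support-size estimates or homogeneity arguments are needed in this direction; those are reserved for the converses of the two implications.
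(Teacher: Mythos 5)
Your proof is correct and follows essentially the same route as the paper: the same injection $a\mapsto\langle n+1,q,a,0\rangle$ for $p\prec q$ and the same $\pr_2$-map for the $\preccurlyeq^\ast$ case (the paper takes it on all of $S_q$ onto a superset of $S_p$, you restrict the domain, which is equivalent). The only difference is that you spell out the $\mathcal{G}$-invariance (empty support) of these maps, which the paper leaves implicit.
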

\begin{proof}
In $\mathcal{V}$, for all $p,q\in P$, if $p\prec q$, then the function that maps each $a\in S_p$
to $\langle n+1,q,a,0\rangle$, where $n$ is the least natural number such that $a\in A_n$,
is an injection from $S_p$ into $S_q$, and if $p\not\preccurlyeq q$ and $p\preccurlyeq^\ast q$, then
the function that maps each $b\in S_q$ to $\pr_2(b)$ is a surjection from $S_q$ onto a superset of $S_p$.
\end{proof}

We say that a subset $C$ of $A$ is \emph{closed} if for all $n\in\omega$, all $a\in C\cap A_n$, and all $q\in P$ with $\pr_1(a)\prec q$,
$\langle n+1,q,a,0\rangle\in C$, and for all $b\in C\setminus A_0$, $\pr_2(b)\in C$.
The \emph{closure} of $B\subseteq A$, denoted by $\Cl(B)$, is the least closed set that includes $B$.
It is easy to see that finite unions of closed subsets of $A$ are closed, and hence $\Cl(B\cup C)=\Cl(B)\cup\Cl(C)$ for all $B,C\subseteq A$.

\begin{lemma}\label{sh01}
For every finite subset $B\subseteq A$, $\{b\in\Cl(B)\mid\pr_3(b)\neq0\}$ is finite.
\end{lemma}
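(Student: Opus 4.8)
The plan is to factor the closure operation into an ``upward'' step, which only ever introduces atoms whose fourth coordinate is $0$, and a ``downward'' step, which is the sole source of atoms with nonzero fourth coordinate but which reaches only finitely many atoms from a finite set. For $C\subseteq A$, let $D(C)=C\cup\{\pr_2(b)\mid b\in C\setminus A_0\}$; let $U(C)$ consist of $C$ together with all atoms $\langle n+1,q,a,0\rangle$ for which $n\in\omega$, $a\in C\cap A_n$, $q\in P$, and $\pr_1(a)\prec q$; and let $D^\ast(B)=\bigcup_{m\in\omega}D^m(B)$.

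First I would check that $D^\ast(B)$ is finite whenever $B$ is finite: for any atom $b$ the sequence $b,\pr_2(b),\pr_2(\pr_2(b)),\dots$ is strictly decreasing in its first coordinate, since $\pr_2$ maps $A_{n+1}\setminus A_n$ into $A_n$, so it reaches $A_0$ (where $\pr_2$ is undefined) within $\pr_0(b)$ steps; hence $D^\ast(B)$ is a finite union of finite ``ancestor chains''. Next I would record two elementary facts about $U$: (i) $U(C)\setminus C$ consists only of atoms with fourth coordinate $0$; and (ii) if $C$ is downward closed (i.e.\ $D(C)=C$) then so is $U(C)$, because every atom $U$ adjoins has its $\pr_2$-parent already in $C$. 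Setting $C_0=D^\ast(B)$ and $C_{m+1}=U(C_m)$, it follows from (ii) by induction that every $C_m$ is downward closed, and from (i) that $\bigl(\bigcup_{m}C_m\bigr)\setminus C_0$ consists only of atoms with fourth coordinate $0$.

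The substantive step is the identity $\Cl(B)=\bigcup_{m\in\omega}C_m$. For $\subseteq$: the right-hand side contains $B$ and is closed in the sense of the paper---it is downward closed because any witness lies in some downward-closed $C_m$, and it meets the upward requirement because any relevant $a$ lies in some $C_m$, whence $\langle n+1,q,a,0\rangle\in C_{m+1}$---so $\Cl(B)$, the least closed superset of $B$, is contained in it. For $\supseteq$: any closed $C'\supseteq B$ is downward closed, hence contains $D^\ast(B)=C_0$, and is closed under $U$, hence contains every $C_m$ by induction; applying this with $C'=\Cl(B)$ yields $\bigcup_m C_m\subseteq\Cl(B)$. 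Combining the identity with the previous paragraph gives $\{b\in\Cl(B)\mid\pr_3(b)\neq0\}\subseteq C_0=D^\ast(B)$, which is finite, and the lemma follows.

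The one point I expect to require real care is getting this factorization right: one must observe that it is legitimate to perform \emph{all} downward moves before any upward moves, which is exactly fact (ii)---atoms produced by $U$ carry their $\pr_2$-parents with them and so create no new downward obligations---and this is what both makes $\bigcup_m C_m$ closed and confines the atoms with nonzero fourth coordinate to the finite set $C_0$. It is also worth noting that $\Cl(B)$ (and already $U(C_0)$) is typically infinite---if some $a\in C_0$ satisfies $\pr_1(a)\prec q$ for some $q$, then $\langle n+1,q,a,0\rangle\in\Cl(B)$ for every $n\geqslant\pr_0(a)$---so the finiteness in the statement genuinely depends on isolating the fourth coordinate; all the extra atoms carry fourth coordinate $0$ and are harmless.
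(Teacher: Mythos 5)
Your proposal is correct and follows essentially the same route as the paper: its simultaneous recursion into $X^B_m$ (upward atoms, all with fourth coordinate $0$) and $Y^B_m$ (downward atoms, shown finite) is exactly your factorization of $\Cl(B)$ into iterates of $U$ applied after $D^\ast(B)$, resting on the same key observation that atoms added upward carry their $\pr_2$-parents with them and so create no new downward obligations. The only cosmetic difference is that you establish finiteness of the downward part via the strictly decreasing first coordinate of the ancestor chains, while the paper argues by induction on the least $n$ with $B\subseteq A_n$.
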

\begin{proof}
Let $B$ be a finite subset of $A$. Define by recursion
\begin{align*}
X^B_0     & =\varnothing,\\
Y^B_0     & =B;\\
X^B_{m+1} & =X^B_m\cup\{\langle n+1,q,a,0\rangle\mid q\in P,a\in(X^B_m\cup Y^B_m)\cap A_n\text{ and }\pr_1(a)\prec q\},\\
Y^B_{m+1} & =Y^B_m\cup\{\pr_2(b)\mid b\in Y^B_m\setminus A_0\}.
\end{align*}
It is easy to see that $\Cl(B)=\bigcup_{m\in\omega}(X^B_m\cup Y^B_m)$.
An easy induction shows that $\pr_3(b)=0$ for all $b\in\bigcup_{m\in\omega}X^B_m$.
Hence, it suffices to show that $\bigcup_{m\in\omega}Y^B_m$ is finite.

We prove by induction on $n\in\omega$ that $\bigcup_{m\in\omega}Y^B_m$ is finite for every finite subset $B\subseteq A_n$.
For $n=0$, an easy induction shows that $Y^B_m=B$ for all $m\in\omega$,
and thus $\bigcup_{m\in\omega}Y^B_m=B$ is finite. Let $B$ be a finite subset of $A_{n+1}$.
Let $B'=\{\pr_2(b)\mid b\in B\setminus A_0\}$. Then $B'$ is a finite subset of~$A_n$.
An easy induction shows that $\bigcup_{m\in\omega}Y^B_m\subseteq B\cup\bigcup_{m\in\omega}Y^{B'}_m$.
Hence, the conclusion follows from the induction hypothesis.
\end{proof}

\begin{lemma}\label{sh02}
For every closed subset $C\subseteq A$ and every $m\in\omega$,
every $g\in\mathcal{G}_m$ that fixes $C\cap A_m$ pointwise extends to a permutation $\pi\in\mathcal{G}$ that fixes $C$ pointwise.
\end{lemma}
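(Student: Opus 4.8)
The plan is to build $\pi$ as the union of a coherent chain of permutations $\pi_n\in\mathcal{G}_n$, defined by recursion on $n$: take $\pi_n=g{\upharpoonright}A_n$ for $n\le m$ (so that $\pi_m=g$), and extend one level at a time for $n\ge m$. Before starting I would record the elementary fact---provable by an immediate induction on $n$---that every $f\in\mathcal{G}_n$ preserves $\pr_1$; this is what guarantees that the formulas used in the recursion produce elements lying in the correct sets. Note that for $n\le m$ each $\pi_n=g{\upharpoonright}A_n$ indeed lies in $\mathcal{G}_n$ and fixes $C\cap A_n$ pointwise, since $C\cap A_n\subseteq C\cap A_m$.

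For the recursion step, assume $n\ge m$ and that $\pi_n\in\mathcal{G}_n$ fixes $C\cap A_n$ pointwise. I would define $\pi_{n+1}$ to agree with $\pi_n$ on $A_n$, and to send each $b=\langle n+1,q,a,k\rangle\in A_{n+1}\setminus A_n$ to $\langle n+1,q,\pi_n(a),k\rangle$; that is, apply $\pi_n$ in the third coordinate and leave the fourth coordinate fixed. Since $\pr_1(\pi_n(a))=\pr_1(a)$ and $\pi_n(a)\in A_n$, the quadruple $\langle n+1,q,\pi_n(a),k\rangle$ satisfies exactly the same membership clause of $A_{n+1}\setminus A_n$ as $b$ does; using that $\pi_n$ is a $\pr_1$-preserving bijection of $A_n$, it is routine to check that $\pi_{n+1}$ is a bijection of $A_{n+1}$ with $\pi_{n+1}{\upharpoonright}A_n=\pi_n\in\mathcal{G}_n$, and that $\pr_1(\pi_{n+1}(b))=\pr_1(b)$ and $\pr_2(\pi_{n+1}(b))=\pi_n(\pr_2(b))=\pi_{n+1}(\pr_2(b))$ for every $b\in A_{n+1}\setminus A_n$; hence $\pi_{n+1}\in\mathcal{G}_{n+1}$.

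The point requiring a little care is that $\pi_{n+1}$ still fixes $C\cap A_{n+1}$ pointwise. For $b\in C\cap A_n$ this is the induction hypothesis. For $b=\langle n+1,q,a,k\rangle\in C\setminus A_n$, closedness of $C$ gives $\pr_2(b)=a\in C$, and as $a\in A_n$ this means $a\in C\cap A_n$, so $\pi_n(a)=a$ and thus $\pi_{n+1}(b)=\langle n+1,q,a,k\rangle=b$. Here is precisely where taking the identity on the fourth coordinate is enough: because $C\cap A_n$ is already pointwise fixed, there is never any need to permute the $k$-indices.

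Finally, since $\pi_{n+1}{\upharpoonright}A_n=\pi_n$ for all $n$, the function $\pi=\bigcup_{n\in\omega}\pi_n$ is a well-defined permutation of $A=\bigcup_n A_n$ with $\pi{\upharpoonright}A_n=\pi_n\in\mathcal{G}_n$ for every $n$, so $\pi\in\mathcal{G}$; it extends $g=\pi_m$, and it fixes $C=\bigcup_n(C\cap A_n)$ pointwise because each $\pi_n$ fixes $C\cap A_n$. I do not anticipate a genuine obstacle; the only things to watch are that the recursion is correctly anchored at level $m$ and that the defining conditions of the $A_n$ and $\mathcal{G}_n$ are respected, both of which reduce to the preservation of $\pr_1$.
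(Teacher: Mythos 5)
Your proposal is correct and follows essentially the same route as the paper: extend $g$ level by level, applying the current permutation to the third coordinate and leaving the fourth coordinate unchanged, using closedness of $C$ (via $\pr_2(b)\in C$) to see that each extension still fixes $C$ pointwise, and taking the union. Your preliminary observation that every $f\in\mathcal{G}_n$ preserves $\pr_1$, and your handling of the levels $n\le m$, are just explicit versions of details the paper leaves implicit.
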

\begin{proof}
We define by recursion $f_n\in\mathcal{G}_{m+n}$ that fixes $C\cap A_{m+n}$ pointwise as follows. Let $f_0=g$.
Let $n\in\omega$ and assume that $f_n\in\mathcal{G}_{m+n}$ is defined and fixes $C\cap A_{m+n}$ pointwise.
Let $f_{n+1}$ be the function on $A_{m+n+1}$ that extends $f_n$ and such that
\begin{itemize}
  \item for all $q\in P$ and all $a\in A_{m+n}$ such that $\pr_1(a)\prec q$,
        \[
        f_{n+1}(\langle m+n+1,q,a,0\rangle)=\langle m+n+1,q,f_n(a),0\rangle;
        \]
  \item for all $q\in P$, all $a\in A_{m+n}$ such that $\pr_1(a)\not\preccurlyeq q$
        and $\pr_1(a)\preccurlyeq^\ast q$, and all $k\in\omega$,
        \[
        f_{n+1}(\langle m+n+1,q,a,k\rangle)=\langle m+n+1,q,f_n(a),k\rangle.
        \]
\end{itemize}
It is easy to see that $f_{n+1}\in\mathcal{G}_{m+n+1}$ fixes $C\cap A_{m+n+1}$ pointwise.
Finally, it suffices to take $\pi=\bigcup_{n\in\omega}f_n$.
\end{proof}

\begin{lemma}\label{sh03}
For every finite subset $B\subseteq A$ and every $c\in A\setminus\Cl(B)$,
there exists a permutation $\pi\in\mathcal{G}$ that fixes $B$ pointwise and moves $c$.
\end{lemma}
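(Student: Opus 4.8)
The plan is to relocate a carefully chosen $\pr_2$-ancestor of $c$ within a single $A_m$, and then lift that relocation to all of $A$ via Lemma~\ref{sh02}. Put $D=\Cl(B)$, which is closed and contains $B$, and list the $\pr_2$-ancestors of $c$ as $c=c_0,c_1,\dots,c_s$, where $c_{j+1}=\pr_2(c_j)$ and $c_s\in A_0$; this terminates because $\pr_0$ strictly decreases along $\pr_2$, and $c_j\in A_0$ holds only for $j=s$. Since $c_0=c\notin D$, let $i^{\ast}$ be the greatest index with $c_{i^{\ast}}\notin D$, and set $a=c_{i^{\ast}}$ and $m=\pr_0(a)$. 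By maximality of $i^{\ast}$, if $i^{\ast}<s$ then $\pr_2(a)=c_{i^{\ast}+1}\in D$, while if $i^{\ast}=s$ then $a=c_s\in A_0$.

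The crucial step is to show that $a$ has infinitely many \emph{siblings}, i.e.\ atoms of $A$ agreeing with $a$ on $\pr_0$, $\pr_1$, and $\pr_2$. This is immediate if $a\in A_0$. Otherwise $a\in A_m\setminus A_{m-1}$, so $a=\langle m,q,a',k\rangle$ with $a'=\pr_2(a)\in A_{m-1}$, and $a$ satisfies one of the two clauses defining $A_m$. It cannot satisfy the first (``rigid'') clause — $k=0$ together with $\pr_1(a')\prec q$ — since then $a'\in D\cap A_{m-1}$ and closedness of $D$ would force $\langle m,q,a',0\rangle=a\in D$, contrary to $a\notin D$. Hence $a$ satisfies the second clause, and then $\langle m,q,a',k'\rangle\in A$ for every $k'\in\omega$; these are the infinitely many siblings of $a$.

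Every sibling of $a$ that lies in $D$ and has third coordinate $\neq 0$ is among the finitely many atoms counted by Lemma~\ref{sh01}, so only finitely many siblings of $a$ lie in $D$; pick a sibling $c'\neq a$ with $c'\notin D$. Let $g\in\mathscr{S}(A_m)$ transpose $a$ with $c'$ and fix every other atom of $A_m$. Since $a$ and $c'$ agree on $\pr_0$, $\pr_1$, $\pr_2$, and their common $\pr_2$-value lies in $A_{m-1}$, a direct check of the defining conditions shows $g\in\mathcal{G}_m$; and $g$ fixes $D\cap A_m$ pointwise because $a,c'\notin D$. By Lemma~\ref{sh02}, $g$ extends to a permutation $\pi\in\mathcal{G}$ fixing $D$ — hence $B$ — pointwise. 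Finally, $\pi$ moves $c$: as $\pi$ commutes with $\pr_2$ on $A\setminus A_0$, iterating $\pr_2$ exactly $i^{\ast}$ times carries $\pi(c)$ to $\pi(a)=g(a)=c'$, whereas it carries $c$ to $a\neq c'$; hence $\pi(c)\neq c$.

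The only genuinely delicate point is the crucial step: one must observe that the lowest $\pr_2$-ancestor of $c$ outside $\Cl(B)$ can never be a rigid successor of a member of $\Cl(B)$. This is exactly what closedness of $\Cl(B)$ guarantees, and it is what makes Lemma~\ref{sh01} applicable so as to produce a sibling that also avoids $\Cl(B)$. The verification that $g\in\mathcal{G}_m$, and the passage from moving $a$ to moving $c$, are routine manipulations of the definitions.
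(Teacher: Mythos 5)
Your proof is correct and is essentially the paper's argument: the same key moves appear (a fresh-index swap of a non-rigid atom justified by Lemma~\ref{sh01}, extension of the transposition via Lemma~\ref{sh02}, and the equivariance $\pr_2(\pi(b))=\pi(\pr_2(b))$ to propagate the motion back up to $c$). The only difference is presentational: you unroll the paper's induction on the level of $c$ into an explicit descent along the $\pr_2$-ancestor chain to the lowest ancestor outside $\Cl(B)$, which by closedness cannot be a rigid atom with parent in $\Cl(B)$ --- exactly the dichotomy the paper's Case~1/Case~2 induction handles step by step.
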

\begin{proof}
Let $B$ be a finite subset of $A$.
We prove by induction on $n\in\omega$ that for all $c\in A_n\setminus\Cl(B)$,
there exists a permutation $\pi\in\mathcal{G}$ that fixes $B$ pointwise and moves $c$.

Let $c\in A_0\setminus\Cl(B)$. Then $c=\langle0,p,\varnothing,k\rangle$ for some $p\in P$ and $k\in\omega$.
By Lemma~\ref{sh01}, there exists an $l\in\omega\setminus\{k\}$ such that $l\neq\pr_3(b)$ for all $b\in\Cl(B)$.
Let $d=\langle0,p,\varnothing,l\rangle$ and let $g\in\mathscr{S}(A_0)$ be the transposition that swaps $c$ and $d$.
Then $g\in\mathcal{G}_0$ fixes $\Cl(B)\cap A_0$ pointwise. By Lemma~\ref{sh02},
$g$ extends to a permutation $\pi\in\mathcal{G}$ that fixes $\Cl(B)$ pointwise.
Then $\pi$ fixes $B$ pointwise and moves $c$ to $d$.

Let $n\in\omega$ and let $c\in A_{n+1}\setminus(A_n\cup\Cl(B))$. Consider the following two cases.

\textsc{Case}~1. $c=\langle n+1,q,a,0\rangle$ for some $q\in P$ and $a\in A_n$ with $\pr_1(a)\prec q$.
Since $c\notin\Cl(B)$, it~follows that $a\notin\Cl(B)$. By the induction hypothesis,
there exists a permutation $\pi\in\mathcal{G}$ that fixes $B$ pointwise and moves $a$.
Hence, $\pr_2(\pi(c))=\pi(\pr_2(c))=\pi(a)\neq a=\pr_2(c)$, which implies that $\pi$ moves $c$.

\textsc{Case}~2. $c=\langle n+1,q,a,k\rangle$ for some $q\in P$ and $a\in A_n$ such that
$\pr_1(a)\not\preccurlyeq q$ and $\pr_1(a)\preccurlyeq^\ast q$ and some $k\in\omega$.
By Lemma~\ref{sh01}, there exists an $l\in\omega\setminus\{k\}$ such that $l\neq\pr_3(b)$ for all $b\in\Cl(B)$.
Let $d=\langle n+1,q,a,l\rangle$ and let $g\in\mathscr{S}(A_{n+1})$ be the transposition that swaps $c$ and~$d$.
Then $g\in\mathcal{G}_{n+1}$ fixes $\Cl(B)\cap A_{n+1}$ pointwise. By Lemma~\ref{sh02},
$g$ extends to a permutation $\pi\in\mathcal{G}$ that fixes $\Cl(B)$ pointwise.
Then $\pi$ fixes $B$ pointwise and moves $c$ to $d$.
\end{proof}

\begin{lemma}\label{sh05}
In $\mathcal{V}$, for all $p,q\in P$, if $p\not\preccurlyeq^\ast q$, then $|S_p|\nleqslant^\ast|S_q|$.
\end{lemma}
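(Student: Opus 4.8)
The plan is to argue by contradiction. Suppose that in $\mathcal{V}$ there is a partial surjection $f$ from $S_q$ onto $S_p$, and fix a finite support $B\subseteq A$ of $f$. The idea is that $S_p$ contains the infinitely many atoms $\langle0,p,\varnothing,k\rangle$ with $k\in\omega$, and that any such atom not lying in $\Cl(B)$ can be moved by a permutation in $\mathcal{G}$ that fixes $B$ pointwise \emph{and} fixes a chosen $f$-preimage of the atom; since such a permutation fixes $f$, this is impossible.

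First I would isolate the structural ingredient. For $x\in A$, iterating $\pr_2$ produces a chain $x,\pr_2(x),\pr_2(\pr_2(x)),\dots$ that strictly descends in level (applying $\pr_2$ to an element of $A_{n+1}\setminus A_n$ lands in $A_n$), so it terminates at a unique atom $\beta(x)\in A_0$. Inspecting the two clauses by which an element of $A_{n+1}\setminus A_n$ is generated shows $\pr_1(\pr_2(y))\preccurlyeq^\ast\pr_1(y)$ for every $y\in A\setminus A_0$: for the first clause, $\pr_1(\pr_2(y))\prec\pr_1(y)$ and ${\prec}\subseteq{\preccurlyeq}\subseteq{\preccurlyeq^\ast}$; for the second clause it is immediate from the generating condition $\pr_1(a)\preccurlyeq^\ast q$. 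By transitivity of $\preccurlyeq^\ast$, $\pr_1(\beta(x))\preccurlyeq^\ast\pr_1(x)$ for all $x\in A$. Finally, for any $c\in A$ the set $\{x\in A\mid\beta(x)=\beta(c)\}$ is easily checked to be closed and to contain $c$, hence includes $\Cl(\{c\})$; since it meets $A_0$ only in $\beta(c)$, we obtain $\Cl(\{c\})\cap A_0=\{\beta(c)\}$. (Alternatively one can read this off the $X^{\{c\}}_m,Y^{\{c\}}_m$ recursion from the proof of Lemma~\ref{sh01}.)

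With this in hand the contradiction is quick. By Lemma~\ref{sh01} only finitely many of the atoms $\langle0,p,\varnothing,k\rangle$ lie in $\Cl(B)$, so fix $k\in\omega$ with $a:=\langle0,p,\varnothing,k\rangle\notin\Cl(B)$. Since $a\in S_p$, choose $c\in S_q$ with $f(c)=a$; then $\pr_1(c)=q$, so $\pr_1(\beta(c))\preccurlyeq^\ast q$, and because $p\not\preccurlyeq^\ast q$ this forces $\pr_1(\beta(c))\neq p$, hence $a\neq\beta(c)$, hence $a\notin\Cl(\{c\})$. As $\Cl(B\cup\{c\})=\Cl(B)\cup\Cl(\{c\})$, we get $a\in A\setminus\Cl(B\cup\{c\})$, so Lemma~\ref{sh03} yields $\pi\in\mathcal{G}$ that fixes $B\cup\{c\}$ pointwise and moves $a$. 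Since $B$ is a support of $f$ and $\pi$ fixes $B$ pointwise, $\pi(f)=f$; applying $\pi$ to $\langle c,a\rangle\in f$ and using $\pi(c)=c$ gives $\langle c,\pi(a)\rangle\in f$, whence $\pi(a)=f(c)=a$, contradicting that $\pi$ moves $a$.

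The step I expect to be the main obstacle is the apparent circularity: the permutation must fix the preimage $c=f^{-1}(a)$, yet $c$ becomes available only after $a$ has been chosen. This is precisely what the identity $\Cl(\{c\})\cap A_0=\{\beta(c)\}$, together with $\pr_1(\beta(c))\preccurlyeq^\ast\pr_1(c)$, is designed to overcome: regardless of which preimage $c\in S_q$ of $a$ we are handed, the hypothesis $p\not\preccurlyeq^\ast q$ guarantees $a\notin\Cl(\{c\})$, so Lemma~\ref{sh03} still applies. Everything else is routine manipulation within the permutation-model framework already set up in Lemmas~\ref{sh01}--\ref{sh03}.
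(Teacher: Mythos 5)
Your proposal is correct and follows essentially the same route as the paper's proof: pick an atom $\langle0,p,\varnothing,k\rangle\in S_p$ outside $\Cl(B)$ (Lemma~\ref{sh01}), observe that it cannot lie in the closure of its chosen preimage in $S_q$ because every $A_0$-element of that closure has first projection $\preccurlyeq^\ast q$, and then apply Lemma~\ref{sh03} to contradict the support of the surjection. Your $\beta$-function argument is just an explicit verification of the step the paper dismisses as ``easy to see,'' and your handling of partial (rather than total) surjections is a harmless refinement.
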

\begin{proof}
Assume towards a contradiction that there exists a surjection $h\in\mathcal{V}$ from $S_q$ onto $S_p$.
Let $B\subseteq A$ be a finite support of $h$. By Lemma~\ref{sh01}, there is a $k\in\omega$ such that
$c=\langle0,p,\varnothing,k\rangle\notin\Cl(B)$. Since $c\in S_p$ and $h$ is surjective,
there exists a $b\in S_q$ such that $h(b)=c$. It is easy to see that for all $a\in\Cl(\{b\})$,
either $\pr_0(a)\neq0$ or $\pr_1(a)\preccurlyeq^\ast q$. Hence, $c\notin\Cl(\{b\})$,
which implies that $c\notin\Cl(B)\cup\Cl(\{b\})=\Cl(B\cup\{b\})$.
By Lemma~\ref{sh03}, there exists a permutation $\pi\in\mathcal{G}$ that fixes $B\cup\{b\}$ pointwise and moves $c$.
Then $\pi$ fixes $h$ and $b$ but moves $c$, which is a contradiction.
\end{proof}

\begin{lemma}\label{sh06}
In $\mathcal{V}$, for all $p,q\in P$, if $p\not\preccurlyeq q$, then $|S_p|\nleqslant|S_q|$.
\end{lemma}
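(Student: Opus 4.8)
The plan is to adapt the proof of Lemma~\ref{sh05}, the extra subtlety being that an injection affords less control than a surjection. Working in $\mathcal{V}$, I would suppose towards a contradiction that $h$ is an injection from $S_p$ into $S_q$ and fix a finite support $B\subseteq A$ of $h$. For each $k\in\omega$ put $c_k=\langle0,p,\varnothing,k\rangle\in S_p$; since $\pr_3(c_k)=k$, Lemma~\ref{sh01} shows that $K=\{k\in\omega\mid c_k\notin\Cl(B)\}$ is cofinite, and in particular infinite. The contradiction will come from examining $h$ on the atoms $c_k$ with $k\in K$.

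First I would record the structural fact that $\Cl(\{c_k\})\cap S_q=\varnothing$: an easy induction shows that $p\preccurlyeq\pr_1(a)$ for every $a\in\Cl(\{c_k\})$ (the point being that $c_k\in A_0$, so passing to the closure only increases the $\pr_1$-value), and since $p\not\preccurlyeq q$ this forces $\pr_1(a)\neq q$. This is precisely where the hypothesis $p\not\preccurlyeq q$ is used, paralleling the observation about $\Cl(\{b\})$ in the proof of Lemma~\ref{sh05}. It then suffices to fix two distinct indices $j,k\in K$ and to distinguish two cases. If $h(c_k)\notin\Cl(B)$, then $h(c_k)\in S_q$ together with $\Cl(\{c_k\})\cap S_q=\varnothing$ gives $h(c_k)\notin\Cl(B)\cup\Cl(\{c_k\})=\Cl(B\cup\{c_k\})$, so Lemma~\ref{sh03} yields a $\pi\in\mathcal{G}$ fixing $B\cup\{c_k\}$ pointwise and moving $h(c_k)$; but $\pi$ fixes both $h$ and $c_k$, so $\pi(h(c_k))=h(\pi(c_k))=h(c_k)$, a contradiction. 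If $h(c_k)\in\Cl(B)$, then I would take the transposition $g$ of $c_j$ and $c_k$ on $A_0$: it lies in $\mathcal{G}_0$ since $\pr_1(c_j)=\pr_1(c_k)=p$, and it fixes $\Cl(B)\cap A_0$ pointwise since $c_j,c_k\notin\Cl(B)$, so by Lemma~\ref{sh02} it extends to a $\pi\in\mathcal{G}$ fixing $\Cl(B)$ pointwise; then $\pi$ fixes $h$, sends $c_k$ to $c_j$, and fixes $h(c_k)$, whence $h(c_j)=h(\pi(c_k))=\pi(h(c_k))=h(c_k)$, contradicting injectivity since $c_j\neq c_k$.

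I expect the case $h(c_k)\in\Cl(B)$ to be the only genuine obstacle: it cannot be handled by a cardinality count, since $\Cl(B)$ may well contain infinitely many elements of $S_q$. The way around it is to use the homogeneity of the construction at level $0$---any transposition of two atoms of $A_0$ lying outside $\Cl(B)$ with the same $\pr_1$-value extends, by Lemma~\ref{sh02}, to a permutation in $\mathcal{G}$ fixing $\Cl(B)$ pointwise---so as to contradict injectivity directly, rather than to move an image atom as in the proof of Lemma~\ref{sh05}. The remaining steps are routine modifications of that proof.
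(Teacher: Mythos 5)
Your proof is correct, and it rests on exactly the same core idea as the paper's: pick a level-$0$ atom $c=\langle0,p,\varnothing,k\rangle$ of $S_p$ outside $\Cl(B)$ (possible by Lemma~\ref{sh01}) and exploit the fact that every $a\in\Cl(\{c\})$ satisfies $p\preccurlyeq\pr_1(a)$, so that $p\not\preccurlyeq q$ forces $\Cl(\{c\})\cap S_q=\varnothing$. The difference is organizational. The paper splits on whether $c\in\Cl(\{b\})$ for $b=h(c)$: if not, it moves $c$ while fixing $B\cup\{b\}$ (Lemma~\ref{sh03}) and contradicts injectivity; if so, it deduces $b\notin\Cl(B)$ and moves $b$ while fixing $B\cup\{c\}$. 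You instead split on whether $h(c_k)\in\Cl(B)$: when it is not, your argument coincides with the paper's second case; when it is, you contradict injectivity directly by transposing two level-$0$ atoms $c_j,c_k\in S_p\setminus\Cl(B)$ and extending via Lemma~\ref{sh02}, which gives $h(c_j)=h(c_k)$. That transposition step is in effect an inlined instance of the $A_0$ case of the proof of Lemma~\ref{sh03}, so your route is marginally more hands-on where the paper's is more uniform (every case funnels through Lemma~\ref{sh03}); on the other hand your version makes the injectivity contradiction more explicit and avoids the small auxiliary deduction that $b\notin\Cl(B)$ in the paper's second case. Both arguments are complete and use the hypothesis $p\not\preccurlyeq q$ at the same point.
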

\begin{proof}
Assume towards a contradiction that there exists an injection $h\in\mathcal{V}$ from $S_p$ into $S_q$.
Let $B\subseteq A$ be a finite support of $h$. By Lemma~\ref{sh01}, there is a $k\in\omega$ such that
$c=\langle0,p,\varnothing,k\rangle\notin\Cl(B)$. Let $b=h(c)$. If $c\notin\Cl(\{b\})$,
then $c\notin\Cl(B)\cup\Cl(\{b\})=\Cl(B\cup\{b\})$, and hence it follows from Lemma~\ref{sh03}
that there exists a permutation $\pi\in\mathcal{G}$ that fixes $B\cup\{b\}$ pointwise and moves $c$.
Then $\pi$ fixes $h$ and $b$ but moves $c$, contradicting the injectivity of $h$.
Otherwise, $c\in\Cl(\{b\})$, and hence $b\notin\Cl(B)$ since $c\notin\Cl(B)$.
It is easy to see that for all $a\in\Cl(\{c\})$, $p\preccurlyeq\pr_1(a)$. Hence, $b\notin\Cl(\{c\})$,
which implies that $b\notin\Cl(B)\cup\Cl(\{c\})=\Cl(B\cup\{c\})$.
By Lemma~\ref{sh03}, there exists a permutation $\pi\in\mathcal{G}$ that fixes $B\cup\{c\}$ pointwise and moves $b$.
Then $\pi$ fixes $h$ and $c$ but moves $b$, which is also a contradiction.
\end{proof}

\begin{theorem}\label{sh07}
In $\mathcal{V}$, for all $p,q\in P$, $p\preccurlyeq q$ if and only if $|S_p|\leqslant|S_q|$,
and $p\preccurlyeq^\ast q$ if and only if $|S_p|\leqslant^\ast|S_q|$.
\end{theorem}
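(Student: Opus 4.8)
The plan is to deduce Theorem~\ref{sh07} directly from the lemmas already established in $\mathcal{V}$, with no further construction needed. The two forward implications are exactly Lemma~\ref{sh04}: if $p\preccurlyeq q$ then $|S_p|\leqslant|S_q|$, and if $p\preccurlyeq^\ast q$ then $|S_p|\leqslant^\ast|S_q|$. For the two converse implications I would argue by contraposition. Suppose $|S_p|\leqslant|S_q|$; were $p\not\preccurlyeq q$, Lemma~\ref{sh06} would give $|S_p|\nleqslant|S_q|$, a contradiction, so $p\preccurlyeq q$. Likewise, suppose $|S_p|\leqslant^\ast|S_q|$; were $p\not\preccurlyeq^\ast q$, Lemma~\ref{sh05} would give $|S_p|\nleqslant^\ast|S_q|$, a contradiction, so $p\preccurlyeq^\ast q$. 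Putting the directions together yields both biconditionals.

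This is really all there is to the proof of Theorem~\ref{sh07}; the step is pure bookkeeping. It is worth recording the consequence, though: since $\preccurlyeq$ is antisymmetric, $|S_p|=|S_q|$ implies $p\preccurlyeq q$ and $q\preccurlyeq p$, hence $p=q$, so the assignment $p\mapsto|S_p|$ is injective and therefore embeds the doubly ordered set $\langle P,\preccurlyeq,\preccurlyeq^\ast\rangle$ into $\langle\mathrm{Card},\leqslant,\leqslant^\ast\rangle$ in $\mathcal{V}$. Feeding this embedding into the Jech--Sochor transfer theorem then produces a model of $\mathsf{ZF}$ as required for Theorem~\ref{SZ03}.

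The genuine difficulty has all been spent earlier, in Lemmas~\ref{sh05} and~\ref{sh06}, whose engine is the closure apparatus of Lemmas~\ref{sh01}--\ref{sh03}. The point I expect to be delicate---and which is what those lemmas isolate---is that the closure of a finite support $B$, even enlarged by a single witness $b\in S_q$ (respectively $c\in S_p$), still omits some level-$0$ atom $\langle 0,p,\varnothing,k\rangle$ of $S_p$: Lemma~\ref{sh01} controls the atoms with $\pr_3\neq0$ inside $\Cl(B)$, while the structural observations that every $a\in\Cl(\{b\})$ with $b\in S_q$ has $\pr_1(a)\preccurlyeq^\ast q$ (or $\pr_0(a)\neq0$), and that every $a\in\Cl(\{c\})$ with $c\in S_p$ has $p\preccurlyeq\pr_1(a)$, keep the relevant closures from swallowing that atom. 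A symmetry from $\mathcal{G}$ fixing $B$ and the witness but moving the omitted atom (supplied by Lemma~\ref{sh03}, via the extension Lemma~\ref{sh02}) then contradicts the existence of the hypothetical surjection or injection. Granting Lemmas~\ref{sh04}--\ref{sh06}, Theorem~\ref{sh07} is immediate.
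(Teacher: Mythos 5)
Your proof is correct and is exactly the paper's argument: the forward implications are Lemma~\ref{sh04}, and the converses follow by contraposition from Lemmas~\ref{sh05} and~\ref{sh06}. The paper's own proof of Theorem~\ref{sh07} is just this one-line deduction, so your proposal matches it precisely.
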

\begin{proof}
Immediately follows from Lemmas~\ref{sh04}, \ref{sh05} and~\ref{sh06}.
\end{proof}

Finally, Theorem~\ref{SZ03} follows from Theorem~\ref{sh07} and the Jech--Sochor transfer theorem.

\section{Concluding remarks}
We conclude this article with a further generalization of Theorem~\ref{SZ03} and several directions for future research.

By replacing $\omega$ with $\kappa^+$ in the proof of Theorem~\ref{SZ03},
using supports of cardinality at most $\kappa$ instead of finite supports,
and applying a transfer theorem of Pincus \cite[Theorem~4]{Pincus1977},
we can readily establish the following theorem, which also generalizes Theorem~\ref{SZ02}.

\begin{theorem}
Let $\mathfrak{M}$ be a model of $\mathsf{ZFC}$, $\kappa$ an infinite cardinal in $\mathfrak{M}$,
and $\langle P,\preccurlyeq,\preccurlyeq^\ast\rangle$ a doubly ordered set in $\mathfrak{M}$.
Then there exists a model $\mathfrak{N}\supseteq\mathfrak{M}$ of $\mathsf{ZF}+\mathsf{DC}_\kappa$
in which $\langle P,\preccurlyeq,\preccurlyeq^\ast\rangle$ can be embedded into $\langle\mathrm{Card},\leqslant,\leqslant^\ast\rangle$.
\end{theorem}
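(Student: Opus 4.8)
The plan is to repeat the construction of Section~2 with two modifications. First, we fatten the fourth coordinate of the atoms: set
\[
A_0=\{\langle0,p,\varnothing,k\rangle\mid p\in P\text{ and }k\in\kappa^+\},
\]
and in the recursive clause defining $A_{n+1}$ let $k$ range over $\kappa^+$ instead of $\omega$ (the clause producing the atoms $\langle n+1,q,a,0\rangle$ is unchanged, and the \emph{level} recursion is still indexed by $n\in\omega$). The groups $\mathcal{G}_n$, $\mathcal{G}$, the sets $S_p$, and the notion of a closed subset of $A$ together with $\Cl(\cdot)$ are then defined verbatim. Second, we let $\mathcal{V}_\kappa$ be the permutation model determined by $\mathcal{G}$ and supports of cardinality at most $\kappa$. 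We record one elementary point that replaces the observation preceding Lemma~\ref{sh01}: since each of the two clauses in the definition of ``closed'' is triggered by a single atom, an arbitrary union of closed sets is closed, so $\Cl(B)=\bigcup_{b\in B}\Cl(\{b\})$ for every $B\subseteq A$.

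Next we check that Lemmas~\ref{sh01}--\ref{sh06} go through with ``finite'' replaced uniformly by ``of cardinality at most $\kappa$''. The analogue of Lemma~\ref{sh01} asserts that $\{b\in\Cl(B)\mid\pr_3(b)\neq0\}$ has cardinality at most $\kappa$ whenever $|B|\leqslant\kappa$. For a single atom $b$, the original argument still shows $\{c\in\Cl(\{b\})\mid\pr_3(c)\neq0\}$ is finite, because the ``downward'' process $Y^{\{b\}}_m$ strictly decreases the level of its atoms and therefore terminates after finitely many steps; the general case then follows from $\Cl(B)=\bigcup_{b\in B}\Cl(\{b\})$ and $\kappa\cdot\aleph_0=\kappa$. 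With this in hand, every counting step in Lemmas~\ref{sh03}, \ref{sh05} and~\ref{sh06} survives: each time we must choose a multiplicity index avoiding $\pr_3[\Cl(B)]$ for some $B$ of cardinality $\leqslant\kappa$, such an index exists because $|\Cl(B)\cap\{b\mid\pr_3(b)\neq0\}|\leqslant\kappa<\kappa^+$. Lemma~\ref{sh02} and its explicit extension procedure, and Lemma~\ref{sh04}, are insensitive to the change of index set. Hence the analogue of Theorem~\ref{sh07} holds in $\mathcal{V}_\kappa$: for all $p,q\in P$ we have $p\preccurlyeq q\iff|S_p|\leqslant|S_q|$ and $p\preccurlyeq^\ast q\iff|S_p|\leqslant^\ast|S_q|$, so $p\mapsto|S_p|$ embeds $\langle P,\preccurlyeq,\preccurlyeq^\ast\rangle$ into $\langle\mathrm{Card},\leqslant,\leqslant^\ast\rangle$ in $\mathcal{V}_\kappa$.

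It remains to see that $\mathcal{V}_\kappa$ satisfies $\mathsf{DC}_\kappa$ and to transfer the statement to a model of $\mathsf{ZF}$. The normal filter on $\mathcal{G}$ generated by the subgroups $\mathrm{fix}(E)$ with $|E|\leqslant\kappa$ is $\kappa^+$-complete, since $\bigcap_{i<\kappa}\mathrm{fix}(E_i)\supseteq\mathrm{fix}\bigl(\bigcup_{i<\kappa}E_i\bigr)$ and $\bigl|\bigcup_{i<\kappa}E_i\bigr|\leqslant\kappa$; by the standard argument for permutation models built from $\kappa^+$-complete normal filters (the same one underlying Theorem~\ref{SZ02}), $\mathcal{V}_\kappa\models\mathsf{DC}_\kappa$. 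Finally, Pincus's transfer theorem \cite[Theorem~4]{Pincus1977} produces a model $\mathfrak{N}\supseteq\mathfrak{M}$ of $\mathsf{ZF}+\mathsf{DC}_\kappa$ in which there is a family $\langle S_p\rangle_{p\in P}$ exhibiting the same relationships under $\leqslant$ and $\leqslant^\ast$, whence $\langle P,\preccurlyeq,\preccurlyeq^\ast\rangle$ embeds into $\langle\mathrm{Card},\leqslant,\leqslant^\ast\rangle$ in $\mathfrak{N}$.

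The routine combinatorics of Section~2 carries over essentially unchanged, so the main obstacle is the last step: one must verify that the permutation model genuinely satisfies $\mathsf{DC}_\kappa$ (checking that the supports really have size $\leqslant\kappa$ and that the filter is really $\kappa^+$-complete) and that the conclusion ``$\langle P,\preccurlyeq,\preccurlyeq^\ast\rangle$ embeds into $\langle\mathrm{Card},\leqslant,\leqslant^\ast\rangle$'' is of the syntactic form to which Pincus's transfer theorem applies. One should also double-check that replacing $\omega$ by $\kappa^+$ does not disturb the single place in Lemma~\ref{sh01} where finiteness was essential---it does not, since that argument used only that each atom lies at a finite level.
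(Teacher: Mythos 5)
Your proposal is correct and follows essentially the same route the paper indicates for this theorem: replace the multiplicity index $\omega$ by $\kappa^+$, use supports of cardinality at most $\kappa$ (yielding a $\kappa^+$-complete normal filter and hence $\mathsf{DC}_\kappa$ in the permutation model), reverify Lemmas~\ref{sh01}--\ref{sh06} with ``finite'' replaced by ``of cardinality at most $\kappa$'', and transfer via Pincus's theorem. Indeed, your sketch supplies more detail than the paper's one-sentence derivation, e.g.\ the reduction $\Cl(B)=\bigcup_{b\in B}\Cl(\{b\})$ used to get the $\leqslant\kappa$ bound in the analogue of Lemma~\ref{sh01}.
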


We say that a cardinal $\mathfrak{a}$ is \emph{Dedekind finite} if $\aleph_0\nleqslant\mathfrak{a}$,
and $\mathfrak{a}$ is \emph{power Dedekind finite} if $\aleph_0\nleqslant2^\mathfrak{a}$,
or equivalently, by Kuratowski's theorem (see~\cite[Proposition~5.3]{Halbeisen2025}), if $\aleph_0\nleqslant^\ast\mathfrak{a}$.
Let $\mathrm{DFCard}$ and $\mathrm{PDFCard}$ denote the classes of all Dedekind finite cardinals
and all power Dedekind finite cardinals, respectively.
Note that in the proof of Theorem~\ref{SZ03}, the cardinals $|S_p|$ (for $p\in P$) may be Dedekind infinite.

\begin{question}
Let $\mathfrak{M}$ be a model of $\mathsf{ZFC}$ and $\langle P,\preccurlyeq,\preccurlyeq^\ast\rangle$ a doubly ordered set in $\mathfrak{M}$.
Is there a model $\mathfrak{N}\supseteq\mathfrak{M}$ of $\mathsf{ZF}$ in which $\langle P,\preccurlyeq,\preccurlyeq^\ast\rangle$ can be
embedded into $\langle\mathrm{DFCard},\leqslant,\leqslant^\ast\rangle$ or even $\langle\mathrm{PDFCard},\leqslant,\leqslant^\ast\rangle$?
\end{question}

The notion of a cardinal algebra was initiated by Tarski in his influential 1949 book~\cite{Tarski1949},
and the notion of a weak cardinal algebra was introduced simultaneously and independently by Bhaskara Rao and Shortt
on the one hand, and by Wehrung on the other hand in \cite{Rao1992,Wehrung1992}.
It is clear that, assuming $\mathsf{AC}_\omega$, the cardinals form a cardinal algebra.
It is proved in \cite[Theorem~2.11]{JS2025} that, assuming $\mathsf{AC}_\omega$,
the surjective cardinals form a weak cardinal algebra (referred to there as the \emph{surjective cardinal algebra}).
On the other hand, \cite[Theorem~3.5]{JS2025} shows that $\mathsf{ZF}+\mathsf{DC}_\kappa$
cannot prove that the surjective cardinals form a cardinal algebra.

\begin{question}
Let $\mathfrak{M}$ be a model of $\mathsf{ZFC}$ and $A$ a cardinal algebra in $\mathfrak{M}$.
Is there a model $\mathfrak{N}\supseteq\mathfrak{M}$ of $\mathsf{ZF}+\mathsf{AC}_\omega$ in which
$A$ can be embedded into $\mathrm{Card}$?
\end{question}

\begin{question}
Let $\mathfrak{M}$ be a model of $\mathsf{ZFC}$ and $A$ a weak cardinal algebra in $\mathfrak{M}$.
Is there a model $\mathfrak{N}\supseteq\mathfrak{M}$ of $\mathsf{ZF}+\mathsf{AC}_\omega$ in which
$A$ can be embedded into the class of surjective cardinals?
\end{question}

\end{document}